\newtheorem{theorem}{Theorem}
\newtheorem{proposition}[theorem]{Proposition}
\newtheorem{lemma}[theorem]{Lemma}
\newtheorem{claim}[theorem]{Claim}
\newtheorem*{claim*}{Claim}
\newtheorem{corollary}[theorem]{Corollary}
\newtheorem{Main Conjecture}[theorem]{Main Conjecture}
\theoremstyle{remark}
\theoremstyle{plain}
\newcommand{\cellsize}{14}
\newlength{\cellsz} \setlength{\cellsz}{\cellsize\unitlength}
\newsavebox{\cell}
\sbox{\cell}{\begin{picture}(\cellsize,\cellsize)
\put(0,0){\line(1,0){\cellsize}}
\put(0,0){\line(0,1){\cellsize}}
\put(\cellsize,0){\line(0,1){\cellsize}}
\put(0,\cellsize){\line(1,0){\cellsize}}
\end{picture}}
\newcommand\cellify[1]{\def\thearg{#1}\def\nothing{}%
\ifx\thearg\nothing
\vrule width0pt height\cellsz depth0pt\else
\hbox to 0pt{\usebox{\cell} \hss}\fi%
\vbox to \cellsz{
\vss
\hbox to \cellsz{\hss$#1$\hss}
\vss}}
\newcommand\tableau[1]{\vtop{\let\\\cr
\baselineskip -16000pt \lineskiplimit 16000pt \lineskip 0pt
\ialign{&\cellify{##}\cr#1\crcr}}}
\newcommand{\excise}[1]{}
\begin{document}
\pagestyle{plain}
\title{Maximizing the Edelman-Greene Statistic}
\author{Gidon Orelowitz}
\address{Dept.~of Mathematics, U.~Illinois at Urbana-Champaign, Urbana, IL 61801, USA} 
\email{gidono2@illinois.edu}
\date{\today}

\begin{abstract}
The \emph{Edelman-Greene statistic} of S. Billey-B. Pawlowski measures the "shortness" of the Schur expansion of a Stanley symmetric function.
We show that the maximum value of this statistic on permutations of Coxeter length $n$ is the number of involutions in the symmetric group $S_n$, and explicitly describe the permutations that attain this maximum.
Our proof confirms a recent conjecture of C. Monical, B. Pankow, and A. Yong: we give an explicit combinatorial injection between 
a certain collections of  Edelman-Greene tableaux and standard Young tableaux.
\end{abstract}
\maketitle

\section{Introduction}
Let $S_n$ be the \emph{symmetric group} on $[n] = \{1,2,\dots, n\}$.  $S_n$ can be embedded in $S_{n+1}$ by the natural inclusion, and from this define $S_\infty = \bigcup_{n=1}^\infty S_n$.  Let $s_i\in S_{\infty}$ be the \emph{simple transposition} swapping $i$ and $i+1$.  Each $w\in S_\infty$ is expressible 
as a product of simple transpositions; the minimum possible length of such an expression is the \emph{Coxeter length} $\ell(w)$. An expression of length $\ell(w)$ is a  
\emph{reduced word} of $w$. Let ${\sf Red}(w)$ be the set of reduced words of $w$.  A permutation $w$ is \emph{totally commutative} \footnote{This is stricter than the definition of the similar sounding \emph{fully commutative} \cite{fullycommutative}.  For example, $23154$ is fully commutative but not totally commutative.} if there exists $s_{i_1}\dots s_{i_{\ell(w)}}\in {\sf Red}(w)$ with $|i_j-i_k|\geq 2$ for all $j\not=k$.

In their study of ${\sf Red}(w)$, P. Edelman and C. Greene \cite{Edelman.Greene} introduced a family of tableaux. 
Fix a partition $\lambda$ and $w\in S_{\infty}$. We say that $S$ is an \emph{Edelman-Greene tableau} (or \emph{EG tableau}) of type $(\lambda,w)$ if it is a filling of the cells of a Young diagram $\lambda$ such that the cells are strictly increasing on rows and columns, and that if the sequence $i_1,i_2,\dots, i_{|\lambda|}$ results from reading the tableau top-to-bottom and right-to-left, then $s_{i_1}s_{i_2}\dots s_{i_{|\lambda|}}\in {\sf Red}(w)$. Let  ${\sf EG}(\lambda,w)$ be the set of these tableaux. Now,
\begin{equation}
{\sf EG}(w) = \sum_{\lambda} a_{w,\lambda} \text{ , where } a_{w,\lambda}  =|{\sf EG}(\lambda,w)|
\end{equation}
is the \emph{Edelman-Greene statistic} of S.~Billey-B.~Pawlowski \cite{EGStat}.

Define ${\sf inv}(n)$ to be the number of involutions in $S_n$, i.e. the number of permutations $w\in S_n$ such that $w^2$ is the identity permutation.

\begin{theorem}
\label{main}
\begin{equation}
\label{maineq}
\max\{{\sf EG}(w):w\in S_\infty,\ell(w)=n\}  = {\sf inv}(n)
\end{equation}
And the maximum is attained by $w\in S_\infty$ if and only $w$ is totally commutative.

\end{theorem}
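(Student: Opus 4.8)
The plan is to prove three facts that together yield the theorem: (i) ${\sf EG}(w)\le{\sf inv}(n)$ whenever $\ell(w)=n$; (ii) ${\sf EG}(w)={\sf inv}(n)$ when $w$ is totally commutative with $\ell(w)=n$; and (iii) ${\sf EG}(w)={\sf inv}(n)$ forces $w$ totally commutative when $\ell(w)=n$ (a totally commutative $w$ of length $n$ always exists, e.g.\ $s_1s_3\cdots s_{2n-1}$, so the value ${\sf inv}(n)$ is attained). Fact (ii), the easy direction, goes as follows. Fix a reduced word $s_{a_1}\cdots s_{a_n}$ of $w$ with the $a_j$ distinct and pairwise non-adjacent. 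Since the $s_{a_j}$ commute, every rearrangement of $a_1,\dots,a_n$ is again a reduced word of $w$, and these exhaust ${\sf Red}(w)$; hence in any $S\in{\sf EG}(\lambda,w)$ each value $a_j$ occurs exactly once, as a repeated value would force a repeated letter in the reading word, which no reduced word of $w$ has. Thus $|\lambda|=n$, and the order-preserving relabeling $a_j\mapsto j$ is a bijection from ${\sf EG}(\lambda,w)$ onto the set of standard Young tableaux of shape $\lambda$ --- the reading-word condition being automatic because all $s_{a_j}$ commute. Summing over $\lambda\vdash n$ and using the classical identity $\sum_{\lambda\vdash n}f^\lambda={\sf inv}(n)$ (RSK restricted to involutions) gives ${\sf EG}(w)={\sf inv}(n)$.

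Fact (i) is the crux, and the step I expect to be the main obstacle. I would prove it by establishing the conjecture of Monical, Pankow, and Yong: an explicit injection $\bigsqcup_\lambda{\sf EG}(\lambda,w)\hookrightarrow\bigsqcup_{\mu\vdash n}{\sf SYT}(\mu)$, ideally in the sharper, shape-preserving form $a_{w,\lambda}\le f^\lambda$ for every $\lambda$ (this already suffices for (i) and will streamline (iii)). Granting such a map, ${\sf EG}(w)=\sum_\lambda a_{w,\lambda}\le\sum_{\mu\vdash n}f^\mu={\sf inv}(n)$. The construction should "standardize" an Edelman--Greene tableau $S$: each repeated value $i$ occupies an anti-chain of cells, and the reading-word requirement --- together with the non-commuting of $s_{i-1},s_i$ and of $s_i,s_{i+1}$ --- forces the occurrences of $i$ to interleave rigidly with those of $i-1$ and of $i+1$; this interleaving pattern is the combinatorial data I would use to spread the repeated entries apart into distinct values, after which an order-preserving relabeling onto $\{1,\dots,n\}$ produces a standard Young tableau. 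Making this spreading canonical and, above all, invertible --- so that the resulting map is injective even on pairs of Edelman--Greene tableaux of $w$ having different shapes --- is the technical heart of the argument.

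Finally, fact (iii). If ${\sf EG}(w)={\sf inv}(n)$ then, in the shape-preserving form of (i), $a_{w,\lambda}=f^\lambda$ for all $\lambda\vdash n$; in particular $a_{w,(n)}=a_{w,(1^n)}=1$. Reading an Edelman--Greene tableau that is a single row exhibits a strictly decreasing reduced word of $w$, and one that is a single column exhibits a strictly increasing one. But a permutation with an increasing reduced word is a product of disjoint cyclic permutations $(a,a+1,\dots,a+m)$ on blocks of consecutive integers (maximal runs of the index sequence give these cycles, and indices from different runs commute), whereas a permutation with a decreasing reduced word is a product of the oppositely oriented cycles $(a+m,\dots,a+1,a)$; since a forward and a backward cycle on the same block agree only when the block has size at most $2$, all blocks here have size at most $2$, so the indices used are pairwise non-adjacent and $w$ is a product of commuting simple transpositions --- totally commutative. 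This establishes (iii), and with (i) and (ii) proves Theorem~\ref{main}. (Should the injection in (i) not come out shape-preserving, fact (iii) must be argued differently --- e.g.\ by separately establishing $a_{w,\lambda}\le f^\lambda$, or by exhibiting a standard Young tableau outside the image of the injection, using that a non-totally-commutative $w$ always has a reduced word containing two equal or two adjacent letters.)
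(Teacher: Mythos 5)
Your overall architecture matches the paper's: an upper bound via a shape-preserving injection from Edelman--Greene tableaux to standard Young tableaux, equality for totally commutative $w$, and a forced-structure argument for the converse. However, there is a genuine gap at exactly the step you yourself flag as the crux: you never prove that the standardization-type map ${\sf EG}(\lambda,w)\to{\sf SYT}(\lambda)$ is injective. The description of repeated entries ``interleaving rigidly'' with the occurrences of $i-1$ and $i+1$, and of ``spreading'' them apart canonically and invertibly, is a plan rather than an argument; nothing in the proposal rules out two distinct EG tableaux of the same shape and the same permutation having the same standardization pattern. This is precisely where the paper's technical content lives: it proves that the ordinary standardization map ${\sf std}$ is already injective on ${\sf EG}(\lambda,w)$ (the Monical--Pankow--Yong conjecture), via the auxiliary ${\sf sweep}$ map, a chain of lemmas (well-definedness of support, preservation of the relative order of $s_a$ versus $s_b$ for $|a-b|=1$ across all reduced words, compatibility of sweeps with labels) and a delicate extremal argument comparing a hypothetical pair $S\neq T$ with ${\sf std}(S)={\sf std}(T)$, extracting a commuting suffix and contradicting well-definedness of support. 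Without some version of this, your fact (i), and hence the inequality ${\sf EG}(w)\le{\sf inv}(n)$, is unsupported, and facts (ii)--(iii) cannot close the theorem.

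The parts you do carry out are sound. Fact (ii) is essentially the paper's argument (every rearrangement of the non-adjacent letters is reduced, labels in an EG tableau are distinct, order-preserving relabeling gives a bijection with ${\sf SYT}(\lambda)$, then $\sum_{\lambda\vdash n}f^\lambda={\sf inv}(n)$). Your fact (iii) is actually a nicer route than the paper's: assuming the termwise bound $a_{w,\lambda}\le f^\lambda$, equality forces $a_{w,(n)}=a_{w,(1^n)}=1$, so $w$ has both a strictly increasing and a strictly decreasing reduced word on the same support; decomposing the support into maximal runs of consecutive indices, the two products are inverse cycles on disjoint blocks, so every run is a singleton and $w$ is totally commutative. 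The paper instead proves a finer classification of $\lambda$-maximal permutations (its Theorem on $\lambda$-maximality, with a case analysis using surjectivity of ${\sf std}$), of which the converse direction of the main theorem is a corollary; your argument would suffice for the main theorem alone, but only once the injectivity of the standardization map is actually established.
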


We offer three comparisons and contrasts with the literature.

First, B. Pawlowski has proved that ${\mathbb E}[{\sf EG}]\geq (0.072)(1.299)^m$ where the expectation is taken over $w\in S_m$ \cite[Theorem~3.2.7]{PawPhD}). More recently, C. Monical, B. Pankow, and A. Yong show that $EG(w)$ is "typically" exponentially large on $S_m$ \cite[Theorem~1.1]{yongpaper}. In comparison, 
Theorem~\ref{main} combined with a standard estimate for ${\sf inv}(n)$ \cite{Knuth} 
gives 
\begin{equation}
\max\{{\sf EG}(w):w\in S_\infty,\ell(w)=n\} \sim \left(\frac{n}{e}\right)^{\frac{n}{2}}\frac{e^{\sqrt{n}}}{(4e)^{\frac{1}{4}}}
\end{equation}

Second,
in \cite{Pak}, maximums for the Littlewood-Richardson coefficients and their generalization, the Kronecker coefficients, were determined.
We remark that the $a_{w,\lambda}$'s are also generalizations of the Littlewood-Richardson coefficients; this follows from \cite[Corollary~2.4]{BJS}. 

Finally, the results of V.~Reiner-M.~Shimozono \cite{plactification} (see specifically their Theorem~33) appear related to ours. Our work does not depend
on their paper and is combinatorial and self-contained.

\section{Proof of (\ref{maineq})}

Our proof of Theorem~\ref{main} is based on 
a specific relationship between EG tableaux and standard Young tableaux.  Recall that a \emph{standard Young tableau} is a filling of the cells of a Young diagram $\lambda$ with the numbers 1 through $|\lambda|$, each used exactly once, such that the cells are strictly increasing along rows and columns.   The set of standard Young Tableaux of shape $\lambda$ is given by ${\sf SYT}(\lambda)$, and denote $f^\lambda=|{\sf SYT}(\lambda)|$.

Figure \ref{fig:standard} gives several examples of the well-known \emph{standardization} map $\textsf{std}: {\sf SSYT}(\lambda)\to {\sf SYT}(\lambda)$, where ${\sf SSYT}(\lambda)$ is the (countably infinite) set of semistandard tableaux of shape $\lambda$.
Suppose $T\in {\sf SSYT}(\lambda)$ and $k_i$ is the number of $i$'s appearing in $T$. Now replace all $1$'s in $T$ from left to right by $1,2,\ldots,k_1$. Then replace all of the (original) $2$'s in $T$ by $k_1+1,k_1+2,\ldots,k_1+k_2$, \emph{etc}. The result of this procedure is ${\sf std}(T)$. 

If we restrict ${\sf std}$ to the subset of ${\sf SSYT}(\lambda)$ consisting of the (finitely many) tableaux with a given content $\mu$, then it is easy to
see that ${\sf std}$ is an injection. Now, content is not constant on ${\sf EG}(\lambda,w)$. Nevertheless, the conjecture of C.~Monical-B.~Pankow-A.~Yong
\cite[Conjecture~3.12]{yongpaper} is the following:

\begin{figure}
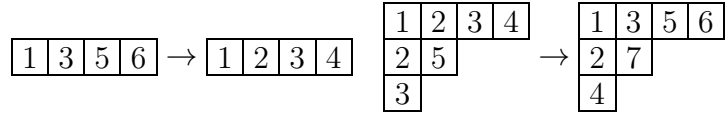

\begin{center}

$\young(1356)\to \young(1234)$\ \   \ \  $\young(1234,25,3) \to \young(1356,27,4)$

\end{center}
\caption{Two examples of standardization.}
\label{fig:standard}
\end{figure}

\begin{theorem}
\label{injection}
The map $\emph{\textsf{std}}: {\sf EG}(\lambda,w)\to {\sf SYT}(\lambda)$ is an injection.
\end{theorem}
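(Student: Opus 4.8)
The plan is to show that $\mathsf{std}$ is injective on $\mathsf{EG}(\lambda,w)$ by proving that the standardization can be \emph{reversed} uniformly across the whole set $\mathsf{EG}(\lambda,w)$ — that is, to produce a well-defined map $\mathsf{SYT}(\lambda)\to\mathsf{EG}(\lambda,w)\cup\{*\}$ that inverts $\mathsf{std}$ on its image. The obstruction noted in the excerpt is that content is not constant on $\mathsf{EG}(\lambda,w)$, so one cannot just invert $\mathsf{std}$ content-class by content-class. The key insight I would pursue is that for EG tableaux the content is in fact \emph{determined by the shape} and by $w$ in a strong local way: because the reading word of an EG tableau is a reduced word for a fixed $w$, the multiset of entries of any $T\in\mathsf{EG}(\lambda,w)$ is forced, and moreover the \emph{positions} of equal entries are constrained. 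Concretely, if $i$ appears more than once in $T$, those copies must lie in distinct rows and distinct columns (by strict increase), and the reduced-word condition forces them to occupy a specific antidiagonal-type pattern. So the repeated values behave like the repeated values in the semistandard tableaux obtained from a single $\lambda$.

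The steps, in order: (1) Recall/establish that all $T\in\mathsf{EG}(\lambda,w)$ have the same content — this should follow from the fact that the content of $T$ records, for each $i$, how many times $s_i$ occurs in the reduced word read from $T$, and the number of occurrences of each generator is an invariant of $w$ (it equals the number of times $s_i$ appears in \emph{any} reduced word of $w$, since adjacent generators $s_i$ can never be cancelled or created by braid/commutation moves — the parity/count of each letter is preserved). Wait: that is false in general for reduced words (e.g. $s_1 s_2 s_1 = s_2 s_1 s_2$ has different counts). So instead I would argue: the content of an EG tableau of shape $\lambda$ is exactly $\lambda$ itself read as a partition? No — rather I would use the known structure theorem from Edelman–Greene that $\mathsf{EG}(\lambda,w)$ relates to the Stanley symmetric function $F_w = \sum_\lambda a_{w,\lambda} s_\lambda$, and that an EG tableau of shape $\lambda$ corresponds to a \emph{ballot}/lattice semistandard filling; in particular within a fixed $\lambda$ the content need not be unique, but the possible contents are exactly those appearing in $s_\lambda$, i.e. dominated by $\lambda$. (2) Given this, define the reverse map on $\mathsf{SYT}(\lambda)$: take $P\in\mathsf{SYT}(\lambda)$, and attempt to "de-standardize" by merging consecutive values $j,j+1,\dots$ whenever the corresponding cells of $P$ form a horizontal strip (no two in the same column) \emph{and} doing so keeps the reading word reduced for $w$; greedily merge as much as possible subject to these constraints. (3) Show that if $P=\mathsf{std}(T)$ then this greedy de-standardization returns $T$ — the crux being that the entries of $T$ already record a maximal such merging, because in an EG tableau one cannot have two equal adjacent (in value) entries in the same column, and the EG/reduced-word condition is exactly preserved under the standardization.

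The main obstacle, and where I would spend the most care, is step (3): proving that $\mathsf{std}$ restricted to $\mathsf{EG}(\lambda,w)$ is injective amounts to showing that the value-multiplicities and their cell-positions in an EG tableau are \emph{recoverable} from the standardized tableau alone, without knowing the content in advance. The danger is two EG tableaux $T\ne T'$ of the same shape for $w$ with different contents but $\mathsf{std}(T)=\mathsf{std}(T')$; ruling this out requires understanding how the reduced-word (EG) condition interacts with where a run of equal entries can be "broken." I expect the right tool is a local analysis: if cells $c$ and $c'$ in the same row of $T$ hold the same value $i$ with a gap between them, the cells strictly between must hold values $>i$ in $T$ but, in $\mathsf{std}(T)$, receive labels between those of $c$ and $c'$ — and one shows the EG condition together with column-strictness pins down exactly which of the "increasing runs" in $\mathsf{std}(T)$ came from a single repeated letter. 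Packaging this as: "$T$ is the unique preimage because its repeated-entry structure is the coarsest one compatible with being an EG tableau for $w$" should close the argument. Alternatively, if the structure theory gives that \emph{content is in fact constant} on each $\mathsf{EG}(\lambda,w)$ after all (which I suspect may be true — the lattice-word condition in Edelman–Greene may force the content to be the conjugate-type statistic depending only on $w$ and $\lambda$), then the theorem is immediate from the already-noted fact that $\mathsf{std}$ is injective on fixed-content subsets, and the whole difficulty collapses; verifying or refuting this constancy is the first thing I would check.
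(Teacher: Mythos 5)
Your proposal does not yet contain a proof; the essential difficulty is exactly the step you defer. Your preferred escape hatch --- that content might after all be constant on ${\sf EG}(\lambda,w)$, so that injectivity follows from injectivity of ${\sf std}$ on fixed-content classes --- is false, and the paper says so explicitly (``content is not constant on ${\sf EG}(\lambda,w)$''); braid moves $s_is_{i+1}s_i=s_{i+1}s_is_{i+1}$ change the multiset of letters in a reduced word, and both multisets can be realized by EG tableaux of the same shape, which is precisely why the theorem is nontrivial. Your remaining route, the greedy ``de-standardization'' of a standard tableau by merging horizontal strips whenever the reading word stays reduced for $w$, is circular as stated: the claim in your step (3), that the repeated-entry structure of an EG tableau is recoverable from ${\sf std}(T)$ alone (equivalently, that the coarsest EG-compatible merging of ${\sf std}(T)$ is $T$ itself), is a restatement of the theorem, not a reduction of it. Indeed, if two EG tableaux $S\neq T$ for the same $w$ had ${\sf std}(S)={\sf std}(T)$, your greedy procedure would simply be ill-behaved; nothing in the sketch rules this scenario out. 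The ``local analysis'' you gesture at (runs of equal entries in a row, lattice/ballot conditions, dominance of contents by $\lambda$) is never connected to the reduced-word condition in a way that could pin down the content, and the dominance statement you invoke from the Stanley symmetric function expansion gives no positional information at all.

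What the actual argument needs, and what the paper supplies, is a mechanism for comparing two hypothetical preimages $S\neq T$ with ${\sf std}(S)={\sf std}(T)$ \emph{through the word relations}: the paper uses Tits' Lemma to get two invariants of ${\sf Red}(w)$ --- the support ${\sf supp}(w)$ is well defined, and for $|a-b|=1$ the property ``all $s_a$'s precede all $s_b$'s'' is reduced-word independent --- then introduces the auxiliary ${\sf sweep}$ map to locate the extremal sweep and the largest discrepant labels $a<b$, shows the letters $\geq b$ in the offending region agree in $S$ and $T$ and commute past everything smaller to their left, strips them off, and obtains two reduced words of the same element with different supports, a contradiction. If you want to salvage your inverse-map strategy, you would have to prove your step (3) by an argument of exactly this kind; as written, the proposal identifies the right obstacle but does not overcome it.
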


\begin{proof}

First, recall that the simple transpositions satisfy:
\begin{equation}
\label{braid1}
s_i s_j=s_js_i \text{ \ for }|i-j|\geq 2
\end{equation}
and
\begin{equation}
\label{braid2}
s_is_{i+1}s_i = s_{i+1}s_is_{i+1}
\end{equation}
where (\ref{braid2}) is the \emph{braid relation}.  Moreover, Tits' Lemma states that any reduced word can be transformed into any other reduced word for the same permutation through a sequence of successive transformations (\ref{braid1}) and (\ref{braid2}).  If $s_{i_1}s_{i_2}\dots s_{i_k}\in {\sf Red}(w)$, define the \emph{support} of $w$ as $\textsf{supp}(w) = \{i_1,i_2,\dots, i_{k}\}$.  

\begin{lemma}
\label{welldefined}
${\sf supp}(w)$ is well-defined.
\end{lemma}
\begin{proof} This follows immediately from Tits' Lemma together with the 
fact that (\ref{braid1}) and (\ref{braid2}) preserve support.
\end{proof}

\begin{lemma}
\label{red order}
For $w\in S_\infty$, if $|a-b|=1$, and there exists a reduced word of $w$ such that all instances of $s_a$ occur before all instances of $s_b$, then the same is true for all reduced words of $w$.
\end{lemma}
\begin{proof}
This holds by Tits' Lemma and examining (\ref{braid1}) and (\ref{braid2}).
\end{proof}

A \emph{descent} of $U\in {\sf SYT}(\lambda)$ is a label $i$ such that $i-1$ is weakly east (and thus strictly north) of $i$. Let $(x,y)$ be the matrix coordinates of a cell in $U$.  Denote the label of cell $(x,y)$ in $U$ by ${\sf Label}_U(x,y)$.  Let the \emph{sweep map} of $U$, ${\sf sweep}(U)$ be the Young tableau of shape $\lambda$, and 
\begin{equation}
{\sf Label}_{{\sf sweep}(U)}(x,y) = |\{k: 1\leq k \leq {\sf Label}_U(x,y), k\text{ is a descent in } U\}|+1.
\end{equation}

\begin{proposition}
${\sf sweep}$ is a map from ${\sf SYT}(\lambda)$ to ${\sf SSYT}(\lambda)$
\end{proposition}
\begin{proof}
Fix $U\in {\sf SYT}(\lambda)$.  For a given cell $(x,y)$ in $U$, ${\sf Label}_U(x,y)< {\sf Label}_U(x,y+1)$, and so the number of descents less than or equal to ${\sf Label}_U(x,y)$ is at most the number of descents less than or equal to ${\sf Label}_U(x,y+1)$, and so by the definition of the sweep map, ${\sf Label}_{{\sf sweep}(U)}(x,y)\leq {\sf Label}_{{\sf sweep}(U)}(x,y+1)$.

Additionally, ${\sf Label}_U(x,y)< {\sf Label}_U(x+1,y)$.  If none of ${\sf Label}_U(x,y)+1, {\sf Label}_U(x,y)+2, \dots, {\sf Label}_U(x+1,y)$ were descents, then each of those labels would be weakly northeast of the one before it, so ${\sf Label}_U(x+1,y)$ would be weakly northeast of ${\sf Label}_U(x,y)$.  This contradicts the fact that $(x+1,y)$ is below $(x,y)$.  Therefore, one of ${\sf Label}_U(x,y)+1, {\sf Label}_U(x,y)+2, \dots, {\sf Label}_U(x+1,y)$ is a descent, and so by the definition of the sweep map, ${\sf Label}_{{\sf sweep}(U)}(x,y)< {\sf Label}_{{\sf sweep}(U)}(x+1,y)$.

Thus we have shown that ${\sf sweep}(U)$ is weakly increasing on rows and strictly increasing on columns, so it is a semistandard Young tableau of shape $\lambda$, and we are done.
\end{proof}

In addition, the $i^{th}$ \emph{sweep} of $U$ is 
\begin{equation}
{\sf sweep}_i(U):= \{(x,y)\in \lambda: {\sf Label}_{{\sf sweep}(U)}(x,y) = i\}.
\end{equation}

\begin{figure}
\begin{center}
$S = \young(1257,257,4,8)$, $T = \young(1357,357,5,8)$, $\textsf{std}(S) = \textsf{std}(T) = \young(1368,257,4,9)$
${\sf sweep}(\textsf{std}(S))  = \young(1234,234,3,5)$
\end{center}
\caption{An example of two semistandard Young tableaux and their images under ${\sf std}$ and ${\sf sweep}$.  Note that $S$ and $T$ are not $EG$ tableaux.}
\label{example}
\end{figure}

\begin{lemma}
\label{leftright}
If $U\in {\sf EG}(\lambda,w)$ and $(x,y),(c,d)\in {\sf sweep}_i({\sf std}(U))$ with $y<d$ for some $i$, then ${\sf Label}_U(x,y)\leq {\sf Label}_U(c,d)$.

\end{lemma}
\begin{proof}
Since $(x,y)$ and $(c,d)$ lie in the same sweep of ${\sf std}(U)$, and $(c,d)$ is to the right of $(x,y)$, the definition of sweep says that ${\sf Label}_{{\sf std}(U)}(x,y)< {\sf Label}_{{\sf std}(U)}(c,d)$.  Therefore, by the definition of standardization, ${\sf Label}_U(x,y)\leq {\sf Label}_U(c,d)$.
\end{proof}

\begin{lemma}
\label{samesweep}
If $U\in {\sf EG}(\lambda,w)$ and ${\sf Label}_{U}(x,y)={\sf Label}_{U}(c,d)$, then ${\sf Label}_{{\sf sweep}({\sf std}(U))}(x,y)={\sf Label}_{{\sf sweep}({\sf std}(U))}(c,d)$.

\end{lemma}
\begin{proof}
Without loss of generality, assume that $(x,y)$ is strictly northeast of $(c,d)$.  This means that ${\sf Label}_{{\sf std}(U)}(x,y)>{\sf Label}_{{\sf std}(U)}(c,d)$.  None of ${\sf Label}_{{\sf std}(U)}(c,d)+1$, ${\sf Label}_{{\sf std}(U)}(c,d)+2$, $\dots$, ${\sf Label}_{{\sf std}(U)}(x,y)$ will be descents, and so $(x,y)$ and $(c,d)$ will be in the same sweep of ${\sf std}(U)$.
\end{proof}

\begin{lemma}
\label{diffsweep}
If $U\in {\sf EG}(\lambda,w)$ and $i<j$, then for $(x,y)\in {\sf sweep}_i({\sf std}(U))$, $(c,d)\in {\sf sweep}_j({\sf std}(U))$, then ${\sf Label}_{U}(x,y) < {\sf Label}_{U}(c,d)$.

\end{lemma}
\begin{proof}
Since ${\sf Label}_{{\sf sweep}({\sf std}(U))}(x,y) =i < j= {\sf Label}_{{\sf sweep}({\sf std}(U))}(c,d)$, it follows from the definition of the sweep map that ${\sf Label}_{{\sf std}(U)}(x,y) < {\sf Label}_{{\sf std}(U)}(c,d)$.  Hence, by the definition of standardization, ${\sf Label}_{U}(x,y) \leq {\sf Label}_{U}(c,d)$.  However, by the contrapositive of Lemma \ref{samesweep}, ${\sf Label}_{U}(x,y) \not= {\sf Label}_{U}(c,d)$, and we are done.
\end{proof}

Now in order to reach a contradiction, assume that there exists $S,T\in {\sf EG}(\lambda,w)$ such that $S\not= T$ and $\textsf{std}(S)=\textsf{std}(T)$.  Since $S\not= T$,
\begin{equation}
D:= \{ (x,y): {\sf Label}_S(x,y)\not= {\sf Label}_T(x,y) \}
\end{equation}
is non-empty.  Define $L = \max\{ i: {\sf sweep}_i({\sf std}(S))\cap D \not= \emptyset \}$.  Let 
\begin{equation}
\label{abdef}
a := \max\{ {\sf Label}_S(x,y): (x,y)\in D\}, \text{ and } 
b := \max\{ {\sf Label}_T(x,y): (x,y)\in D\}.
\end{equation}

There are two cases to consider: either $a=b$ or $a\not= b$.  For the first case, by definition there exists $(x,y),(c,d)\in D$ be such that ${\sf Label}_S(x,y)=a$, and ${\sf Label}_T(c,d)=b$.  By the definition of $D$, ${\sf Label}_T(x,y)\not= {\sf Label}_S(x,y)={\sf Label}_T(c,d)$.  Also, by the definition of $b$, ${\sf Label}_T(c,d) = b \geq {\sf Label}_T(x,y)$, and so ${\sf Label}_T(c,d) > {\sf Label}_T(x,y)$.  By the definition of standardization, this means that ${\sf Label}_{{\sf std}(T)}(c,d) > {\sf Label}_{{\sf std}(T)}(x,y)$.  However, similarly, ${\sf Label}_S(x,y) > {\sf Label}_S(c,d)$, which means that ${\sf Label}_{{\sf std}(S)}(x,y) > {\sf Label}_{{\sf std}(S)}(c,d)$.  However, this contradicts the fact that ${\sf std}(S) = {\sf std}(T)$, completing the proof in this case.

For the second case ($a\not= b$), assume without loss of generality that $b> a$.  By Lemma \ref{welldefined}, some cell in $S$ is labeled $b$ as well, so define
\begin{equation}
B = \{(x,y):{\sf Label}_S(x,y) = b\} \text{ and } C = \min\{y:(x,y)\in B\}.
\end{equation}

\begin{claim}
\label{shareb}
All cells labeled $b$ in $S$ are also labeled $b$ in $T$, and there exists at least one cell labeled $b$ in $T$ that is to the left of column $C$.
\end{claim}
\begin{proof}
Since $b>a$, $B\cap D = \emptyset$, and so if $(c,d)\in B$, ${\sf Label}_T(c,d) = b$ as well. In addition, by the definition of $b$ there exists some cell $(x,y)\in D$ such that ${\sf Label}_T(x,y) = b$, so $(x,y)\not\in B$.  By Lemma \ref{samesweep}, these cells must all be in the same sweep of ${\sf std}(T)$.  We also know that, since $(x,y)\in D$, ${\sf Label}_S(x,y)\leq a < b$, so by Lemma \ref{leftright}, $(x,y)$ must lie to the left of all cells in $B$, and so it must lie to the left of the column with index $C$, completing the proof.
\end{proof}

\begin{claim}
\label{SweepL}
In $T$, all cells labeled $b$ are in ${\sf sweep}_L({\sf std}(T))$, and all cells labeled $a,a+1,\dots,b$ in $S$ are in ${\sf sweep}_L({\sf std}(S))$.
\end{claim}
\begin{proof}
By the definition of $L$, there is some cell $(x,y)\in {\sf sweep}_L({\sf std}(T))\cap D$.  By the definition of $b$, there exists some cell $(c,d)\in D$ such that ${\sf Label}_T(c,d) =b$ and $b\geq {\sf Label}_T(x,y)$, so by the contrapositive of Lemma \ref{diffsweep}, ${\sf Label}_{{\sf sweep}({\sf std}(T))}(c,d)\geq {\sf Label}_{{\sf sweep}({\sf std}(T))}(x,y) = L$.  However, since $(c,d)\in D$, ${\sf Label}_{{\sf sweep}({\sf std}(T))}(c,d)\leq L$, and so ${\sf Label}_{{\sf sweep}({\sf std}(T))}(c,d)= L$.  As a result, since ${\sf Label}_{T}(c,d)= b$, Lemma \ref{samesweep} implies that all cells labeled $b$ in $T$ must be in ${\sf sweep}_L({\sf std}(T))$.

By the argument of the previous paragraph (replacing $T$ with $S$ and $b$ with $a$), all cells labeled $a$ in $S$ must be in ${\sf sweep}_L({\sf std}(S))$.  By Claim \ref{shareb}, any cells in $B$ are labeled $b$ in $T$ as well.  Therefore, since all cells labeled $b$ in $T$ are in ${\sf sweep}_L({\sf std}(T))$, all cells in $B$ are also in ${\sf sweep}_L({\sf std}(T))= {\sf sweep}_L({\sf std}(S))$.  Additionally, the contrapositive of Lemma \ref{diffsweep} implies that any cell labels between $a$ and $b$ in $S$ must occur in ${\sf sweep}_L({\sf std}(S))$ as well, completing the proof.
\end{proof}

For $U\in {\sf EG}(\lambda,w)$, let the \emph{reading word} of $U$, denoted ${\sf Red}(U)$, be $s_{i_1}s_{i_2}\dots s_{i_{|\lambda|}}$, where $i_1,i_2,\dots, i_{|\lambda|}$ is the sequence of labels of $U$ reading from top-to-bottom and right-to-left.  By definition, ${\sf Red}(U)\in {\sf Red}(w)$.

\begin{claim}
\label{columnC}
In all columns with index at least $C$, no cell can be labeled $b-1$ in either $S$ or $T$.
\end{claim}
\begin{proof}
$a\leq b-1<b$, so Claim \ref{SweepL} says that all cells labeled $b-1$ or $b$ in $S$ are in ${\sf sweep}_L({\sf std}(S))$.  By Lemma \ref{leftright} all cells labeled $b-1$ in $S$ must occur strictly to the left of all cells labeled $b$ in $S$, which means none of them can be in a column with index at least $C$.

As a result, all $s_{b-1}$'s will occur after all $s_b$'s in ${\sf Red}(S)$, and so by Lemma \ref{red order}, the same is true for ${\sf Red}(T)$, since we assumed that ${\sf Red}(S),{\sf Red}(T)\in {\sf Red}(w)$.  This means that all cells labeled $b-1$ in $T$ must occur in some column weakly to the left of the leftmost occurrence of a cell labeled $b$ in $T$.  By Claim \ref{shareb}, this is strictly to the left of the column indexed $C$.  Therefore, in all columns with index at least $C$, no cell can be labeled $b-1$ in either $S$ or $T$, so the claim is true.
\end{proof}

Define $G = (\bigcup_{i=L}^\infty {\sf sweep}_i({\sf std}(S)))\cap \{(x,y):y\geq C\}$.

\begin{claim}
\label{samebig}
For all $(x,y)\in G$, ${\sf Label}_S(x,y) = {\sf Label}_T(x,y) \geq b$.
\end{claim}
\begin{proof}
Since Claim \ref{SweepL} says that there is a cell in ${\sf sweep}_L({\sf std}(S))$ labeled $b$ in $S$, every cell in $\bigcup_{i=L+1}^\infty {\sf sweep}_i({\sf std}(S))$ will have a label larger than $b$ in $S$ by Claim \ref{diffsweep}.  The definition of $C$ says that all cells in ${\sf sweep}_L({\sf std}(S))$ in a column labeled at least $C$ will have a label of $b$ or more in $S$.  As a result, all $(x,y)\in G$ have ${\sf Label}_S(x,y) \geq b$.  Since $b> a$, none of these cells are in $D$, and so they have the same labels in $T$ as well, completing the proof.
\end{proof}

Let $s_{i_1}s_{i_2}\dots s_{i_{|\lambda|}} = {\sf Red}(S)$ and let $s_{j_1}s_{j_2}\dots s_{j_{|\lambda|}} = {\sf Red}(T)$.  Let $I$ be the set of all indexes $k$ such that $s_{i_k}$ corresponds to a cell $(x,y)\in G$, and let $M = \max(I)$.  By Claim \ref{samebig}, $i_k=j_k \geq b$ for all $k\in I$.  By Claim \ref{columnC}, $i_a, j_a<b-1$ for $a\leq M$, $a\not\in I$, so $s_{i_k}$ commutes with $s_{i_a}$ for all such $a\leq M$, $a\not\in I$ and $k\in I$.  Therefore, 
\begin{equation}
\prod_{k\in I}s_{i_k} \prod_{a\not\in I}s_{i_a} = {\sf Red}(S) = {\sf Red}(T) = \prod_{k\in I}s_{j_k} \prod_{a\not\in I} s_{j_a}
\end{equation}
and so multiplying both sides by $(\prod_{k\in I}s_{i_k})^{-1}$ results in $\prod_{a\not\in I}s_{i_a} = \prod_{a\not\in I} s_{j_a}$, and we denote the two sides ${\sf Red}(S')$ and ${\sf Red}(T')$ respectively.  However, the definition of $C$ says that $b\not\in {\sf supp}({\sf Red}(S'))$, but Claim \ref{shareb} says that $b\in {\sf supp}({\sf Red}(T'))$.  This contradicts Lemma \ref{welldefined}, and we are done.
\end{proof}

To illustrate the above argument, in Figure \ref{example}, the squares corresponding to the fourth and fifth sweeps in $S$ and $T$ are the same, but not for the third sweep, so in this case, $L=3$, $b = 5$, and $C = 2$.

This means that, by the fact that $S,T\in EG(\lambda,w)$,
\begin{equation}
w= s_7s_5s_7s_2s_5s_1s_2s_4s_8 = s_7s_5s_7s_3s_5s_1s_3s_5s_8
\end{equation}
and by $(\ref{braid1})$, this can be rewritten this as
\begin{equation}
w= s_7s_5s_7s_5s_2s_1s_2s_4s_8 = s_7s_5s_7s_5s_3s_1s_3s_5s_8
\end{equation}
and multiplying both permutations by $s_5s_7s_5s_7$ on the left results is \begin{equation}
s_2s_1s_2s_4s_8 = s_3s_1s_3s_5s_8
\end{equation}
However, only one of the two permutations has $s_5$ in it, contradicting Lemma \ref{welldefined}, and completing the proof.

\begin{corollary}\label{cor1}
\begin{equation}
a_{w,\lambda} \leq f^\lambda
\end{equation}
\end{corollary}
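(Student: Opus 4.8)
The plan is to read the inequality off directly from Theorem~\ref{injection}, which was just established. By the definitions recalled in the introduction and in Section~2, we have $a_{w,\lambda} = |{\sf EG}(\lambda,w)|$ and $f^\lambda = |{\sf SYT}(\lambda)|$. Theorem~\ref{injection} furnishes an injection
\[
\textsf{std}\colon {\sf EG}(\lambda,w) \hookrightarrow {\sf SYT}(\lambda),
\]
and the existence of an injection between two sets forces the cardinality of the domain to be at most that of the codomain. Hence $a_{w,\lambda} = |{\sf EG}(\lambda,w)| \leq |{\sf SYT}(\lambda)| = f^\lambda$, which is exactly the claimed bound.

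The only point that deserves a word is that this cardinality comparison is meaningful: ${\sf SYT}(\lambda)$ is a finite set, and ${\sf EG}(\lambda,w)$ is finite as well, since the entries of any $U \in {\sf EG}(\lambda,w)$ are bounded (they lie in ${\sf supp}(w)$, which is finite by Lemma~\ref{welldefined}), so there are only finitely many fillings of $\lambda$ that could possibly qualify. With finiteness in hand, the injection immediately yields the stated inequality, and there is nothing further to prove.

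There is essentially no obstacle here — all of the combinatorial content was absorbed into the proof of Theorem~\ref{injection}, and Corollary~\ref{cor1} is simply the numerical shadow of that injection. (One could alternatively phrase the argument without ever mentioning finiteness, by treating $\leq$ as the comparison of cardinals in the usual sense, but spelling out finiteness makes the statement most transparent for the later applications to Theorem~\ref{main}.)

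\begin{proof}
By definition $a_{w,\lambda} = |{\sf EG}(\lambda,w)|$ and $f^\lambda = |{\sf SYT}(\lambda)|$, and both sets are finite (for ${\sf EG}(\lambda,w)$, note that every entry of such a tableau lies in the finite set ${\sf supp}(w)$). Theorem~\ref{injection} gives an injection ${\sf EG}(\lambda,w) \hookrightarrow {\sf SYT}(\lambda)$, so $a_{w,\lambda} \leq f^\lambda$.
\end{proof}
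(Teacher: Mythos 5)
Your proof is correct and is essentially the paper's argument: the paper also deduces the inequality immediately from the injection of Theorem~\ref{injection}. Your extra remark on finiteness of ${\sf EG}(\lambda,w)$ is a harmless (and accurate) elaboration.
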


\begin{proof}

This is immediate from Theorem~\ref{injection}. 
\end{proof}

By Corollary~\ref{cor1},
\begin{equation}
{\sf EG}(w) = \sum_{|\lambda|= \ell(w)} a_{w,\lambda} \leq \sum_{|\lambda|= \ell(w)} f^\lambda.
\end{equation}
Taking the maximum over all $w\in S_\infty$ with $\ell(w)=n$ gives 
\begin{equation}
\max\{{\sf EG}(w):w\in S_\infty,\ell(w)=n\}  \leq \sum_{|\lambda|= n} f^\lambda = {\sf inv}(n)
\end{equation}
where the last equality is a consequence of the Schensted correspondence (for example, in \cite{ECII}, Corollary 7.13.9).

For the other direction of $(\ref{maineq})$, fix $n\in \mathbb{N}$ and consider $w_n=s_1s_3\dots s_{2n-1}$.  By inspection, $\ell(w_n) = n$ and any reordering of $s_1s_3\dots s_{2n-1}$ is also a valid reduced word for $w_n$.  Therefore, for each $S\in \textsf{SYT}(\lambda)$ with $|\lambda|=n$, replacing each cell's label $i$ with $2i-1$ is a bijection from $\textsf{SYT}(\lambda)$ to $\textsf{EG}(\lambda,w_n)$, so 
\begin{equation}
f^\lambda = |\textsf{SYT}(\lambda)| = |\textsf{EG}(\lambda,w_n)| = a_{w_n,\lambda}
\end{equation}
and as a result, 
\begin{equation}
{\sf EG}(w_n) = \sum_{|\lambda|=n} a_{w_n,\lambda} = \sum_{|\lambda|=n} f^\lambda = {\sf inv}(n)
\end{equation}
which proves that $(\ref{maineq})$ is an equality.

\section{Classification of the Maximizers of (\ref{maineq})}

For a Young diagram $\lambda$, define $w\in S_\infty$ to be \emph{$\lambda$-maximal} if $a_{w,\lambda} = f^\lambda$. We now classify which $w$ are $\lambda$-maximal for each fixed $\lambda$.

\begin{theorem}
\label{maximal}
Let $\lambda$ be a Young diagram, and let $w$ be a permutation.
\begin{enumerate}
\item
If $\lambda$ has only one row, $w$ is $\lambda$-maximal if and only if there exists $i_1<i_2<\dots < i_{|\lambda|}$ such that $w = s_{i_{|\lambda|}}s_{i_{|\lambda|-1}}\dots s_{i_1}$.
\item
If $\lambda$ has only one column, $w$ is $\lambda$-maximal if and only if $w = s_{i_1}s_{i_2}\dots s_{i_{|\lambda|}}$ for some $i_1<i_2<\dots < i_{|\lambda|}$.
\item
If $\lambda$ has more than one row and more than one column, $w$ is $\lambda$-maximal if and only if $\ell(w) = |\lambda|$ and $w$ is totally commutative.
\end{enumerate}
\end{theorem}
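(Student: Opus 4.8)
The key reformulation is that, by Corollary~\ref{cor1} and Theorem~\ref{injection}, $w$ is $\lambda$-maximal exactly when the injection ${\sf std}\colon{\sf EG}(\lambda,w)\to{\sf SYT}(\lambda)$ is \emph{onto}. In particular $\lambda$-maximality forces ${\sf EG}(\lambda,w)\neq\varnothing$, and since the reading word of any EG tableau of shape $\lambda$ is a reduced word of $w$ with $|\lambda|$ letters, it forces $\ell(w)=|\lambda|$. Parts (1) and (2) are now immediate: there $f^\lambda=1$, so $\lambda$-maximality just says ${\sf EG}(\lambda,w)\neq\varnothing$; a one-row (resp.\ one-column) EG tableau is a single increasing string $c_1<\dots<c_{|\lambda|}$ whose reading word is $s_{c_{|\lambda|}}\cdots s_{c_1}$ (resp.\ $s_{c_1}\cdots s_{c_{|\lambda|}}$), so ${\sf EG}(\lambda,w)\neq\varnothing$ iff $w$ has that form, and conversely any such expression is reduced and produces the tableau, which is unique since a strictly monotone reduced word is determined by its support (well-defined by Lemma~\ref{welldefined}). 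For part (3), the implication ``$\ell(w)=|\lambda|$ and $w$ totally commutative $\Rightarrow$ $\lambda$-maximal'' repeats the $w_n$ argument of Section~2 verbatim: choosing $s_{i_1}\cdots s_{i_{|\lambda|}}\in{\sf Red}(w)$ with the $i_a$ pairwise at distance $\geq 2$, every rearrangement of it is again in ${\sf Red}(w)$, so for each $T\in{\sf SYT}(\lambda)$ the tableau obtained by substituting the $a$-th smallest of $i_1,\dots,i_{|\lambda|}$ for every entry $a$ of $T$ lies in ${\sf EG}(\lambda,w)$, giving $a_{w,\lambda}\geq f^\lambda$.

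It remains to prove the converse of (3): if $\lambda$ has at least two rows and at least two columns and $w$ is $\lambda$-maximal, then $w$ is totally commutative (that $\ell(w)=|\lambda|=:n$ is already known). Let $U_{\sf row}\in{\sf SYT}(\lambda)$ be the tableau that is standard and increases through row $1$, then row $2$, etc.; a short computation with the paper's notion of descent shows the descents of $U_{\sf row}$ are exactly the row-starting labels $\lambda_1+1,\lambda_1+\lambda_2+1,\dots$, so ${\sf sweep}_i(U_{\sf row})$ is exactly the $i$-th row of $\lambda$. Pick $S\in{\sf EG}(\lambda,w)$ with ${\sf std}(S)=U_{\sf row}$. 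By Lemma~\ref{diffsweep} every label in row $i$ of $S$ is strictly smaller than every label in row $i+1$, and labels increase strictly along each row; hence all $n$ labels of $S$ are distinct. Thus $w$ has a reduced word with $n$ distinct letters, so $|{\sf supp}(w)|=\ell(w)=n$, which forces \emph{every} reduced word of $w$, and hence the label set of \emph{every} EG tableau of $w$, to be $\{p_1<\dots<p_n\}:={\sf supp}(w)$ with no repeats. Consequently, on ${\sf EG}(\lambda,w)$ the map ${\sf std}$ is just the rank-relabeling to $\{1,\dots,n\}$, and its surjectivity means precisely: for every $U\in{\sf SYT}(\lambda)$, the tableau obtained from $U$ by replacing each entry $a$ with $p_a$ has reading word in ${\sf Red}(w)$.

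Now suppose $w$ is not totally commutative; since $|{\sf supp}(w)|=\ell(w)$, some consecutive integers $i,i+1$ both lie in ${\sf supp}(w)$, say $p_j=i$ and $p_{j+1}=i+1$ with $1\le j\le n-1$. Each of $s_i,s_{i+1}$ occurs once in every reduced word of $w$, so by Lemma~\ref{red order} their order is constant over ${\sf Red}(w)$; assume $s_i$ precedes $s_{i+1}$ throughout (the reverse case is symmetric, exchanging ``above'' and ``below'' below). Then for every $U\in{\sf SYT}(\lambda)$, in the relabeling of $U$ above the cell carrying $i$ (the cell of entry $j$ in $U$) precedes the cell carrying $i+1$ (entry $j+1$) in the top-to-bottom, right-to-left reading order; since the relabeled tableau has distinct entries, this is impossible if entries $j,j+1$ share a row (then $j$ is to the left, hence read \emph{after} $j+1$), so entry $j$ must sit in a strictly higher row than entry $j+1$ in $U$. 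Thus $\lambda$-maximality would force \emph{every} $U\in{\sf SYT}(\lambda)$ to have entry $j$ strictly north of entry $j+1$. This contradicts the following fact, which is the last ingredient: \emph{if $\lambda$ is not a single column, then for each $1\le j\le n-1$ some $U\in{\sf SYT}(\lambda)$ has entry $j+1$ weakly north of entry $j$} (and, for the reverse case, \emph{if $\lambda$ is not a single row, then for each $j$ some $U\in{\sf SYT}(\lambda)$ has entry $j+1$ strictly south of entry $j$}). Hence $w$ is totally commutative, completing (3).

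Finally, the SYT fact. To place $j+1$ weakly north of $j$: if $j+1\le\lambda_1$, fill the first $j+1$ cells of row $1$ with $1,\dots,j+1$ (so $j,j+1$ lie in one row) and complete arbitrarily by a standard skew tableau; if $j+1>\lambda_1$, choose a Young subdiagram $\nu\subseteq\lambda$ with $|\nu|=j-1$ having two addable corners inside $\lambda$ --- these necessarily lie in distinct rows $r_1<r_2$ --- and grow $\nu$ by putting entry $j$ in the corner in row $r_2$ and then entry $j+1$ in the corner in row $r_1$ (legal, since the lower box does not block the upper one) before completing. Finding such a $\nu$ is where ``$\lambda$ has more than one column'' is used, and the few degenerate cases (e.g.\ $\lambda$ a rectangle with $j=n-1$) are handled by instead placing $j,j+1$ as a horizontal domino at the end of the last row; the ``strictly south'' statement follows by the same constructions with north and south interchanged, using that $\lambda$ has more than one row. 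I expect this explicit tableau construction --- together with the bookkeeping translating ``cell $A$ precedes cell $B$ in reading order'' into ``$A$ strictly north of $B$'' for tableaux with distinct entries --- to be the only genuinely delicate step; everything else is assembly of the sweep lemmas of Section~2 with Lemmas~\ref{welldefined} and \ref{red order}.
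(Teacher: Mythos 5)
Your overall architecture is essentially the paper's: reduce to the case $|{\sf supp}(w)|=\ell(w)$ using bijectivity of ${\sf std}$, observe that then ${\sf std}$ is forced to be the rank-relabeling by ${\sf supp}(w)=\{p_1<\dots<p_n\}$, use Lemma \ref{red order} to conclude that the relative order of $s_{p_j},s_{p_{j+1}}$ (for a consecutive pair $p_{j+1}=p_j+1$) is the same in every reduced word, and then contradict this by producing standard Young tableaux in which the cells destined to carry $p_j$ and $p_j+1$ appear in the ``wrong'' reading order. Your route to full support (via the preimage of the row-superstandard tableau and Lemma \ref{diffsweep}) is a valid variant of the paper's Lemma \ref{non-maximal}, and your translation of ``reading order'' into ``strictly north / weakly north'' is correct. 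Parts (1), (2) and the reverse direction of (3) are fine.

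The soft spot is the final SYT existence fact, which you yourself flag as ``the only genuinely delicate step'' and then only sketch. Because you work with an arbitrary consecutive pair $(p_j,p_j+1)$ rather than isolating the exceptional positions, you need, for \emph{every} $1\le j\le n-1$: some $U$ with entry $j+1$ weakly north of entry $j$ (and, in the symmetric case, some $U$ with $j+1$ strictly south of $j$). Your construction covers $j+1\le\lambda_1$ and the generic case via a subshape $\nu$ of size $j-1$ with two addable corners inside $\lambda$, but (i) the existence of such a $\nu$ for all remaining $j$ is asserted, not proved, and (ii) the proposed repair for ``the few degenerate cases'' --- placing $j,j+1$ as a horizontal domino at the end of the last row --- is only even applicable when $j=n-1$ (only then does $\lambda$ minus that domino have $j-1$ cells), so you are implicitly claiming, without verification, that no degenerate case occurs for $\lambda_1\le j\le n-2$. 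That claim is true, but establishing it is exactly the nontrivial case analysis you skipped. Compare with the paper: it pins down precisely the two situations in which the desired tableau cannot exist ($m=1$, where entry $1$ is never northeast of entry $2$; and $m=n-1$ with $\lambda$ a rectangle, where entry $n$ sits at the unique corner), handles each by an explicit pair of tableaux with opposite reading orders, and only asserts the generic Case 3 existence (equivalent to your two-addable-corner statement for that particular $m$). So to make your write-up complete you must either prove the $\nu$-existence statement for all $j$ in the stated range (with the degenerate cases exactly delimited), or adopt the paper's sharper three-case split, which confines the unavoidable construction to a single generic case.
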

\begin{proof}
(1) and (2) are trivial by the definition of $\lambda$-maximal.  Therefore, the rest of this proof is devoted to proving (3).  The reverse direction follows from the following lemma:

\begin{lemma}
\label{totally commutative}
If $w$ is totally commutative, then it is $\lambda$-maximal for all $|\lambda|=\ell(w)$.
\end{lemma}
\begin{proof}
Let $i_1,\dots i_k$ be as in the definition of totally commutative.  Then by (\ref{braid1}), 
\begin{equation}
s_{i_{\sigma(1)}}s_{i_{\sigma(2)}}\dots s_{i_{\sigma(\ell(w))}} \in {\sf Red}(w) \text{ for all } \sigma\in S_{\ell(w)}.
\end{equation}
For any $T \in {\sf SYT}(\lambda)$, replacing the label $k$ with the $k^{th}$ smallest element of ${\sf supp}(w)$ turns $T$ into an element $T'\in {\sf EG}(w,\lambda)$.  This mapping $T\mapsto T'$ is clearly an injection, so this and Corollary \ref{cor1} combine to say that $a_{w,\lambda}=f^\lambda$.
\end{proof}

The forward direction also requires a lemma.

\begin{lemma}
\label{non-maximal}
If $|{\sf supp}(w)| < \ell(w)$, then $w$ is not $\lambda$-maximal for any $\lambda$.
\end{lemma}
\begin{proof}
Assume for the sake of contradiction that there exists some Young diagram $\lambda$ such that $w$ is $\lambda$-maximal.  Fix an arbitrary $U\in {\sf EG}(w,\lambda)$.  Since $|{\sf supp}(w)| < \ell(w) = |\lambda|$, there exists $(x,y)$ and $(c,d)$ such that ${\sf Label}_U(x,y)={\sf Label}_U(c,d)$.  Since $U$ is strictly increasing on rows and columns, without loss of generality $(c,d)$ is strictly northeast of $(x,y)$, and in particular $\lambda$ must have more than one row and more than one column.  As a result, ${\sf Label}_{{\sf std}(U)}(x,y) < {\sf Label}_{{\sf std}(U)}(c,d)$.  This is a contradiction, as then no element of ${\sf EG}(w,\lambda)$ maps to $S\in {\sf SYT}(\lambda)$, the unique element of ${\sf SYT}(\lambda)$ where cells are labeled $1$ through $|\lambda|$ by going from left to right and top to bottom, but ${\sf std}:{\sf EG}(w,\lambda)\to {\sf SYT}(\lambda)$ is an injection between two equally sized finite sets by Theorem \ref{injection}, so it should be a surjection. 
\end{proof}

Let $\lambda$ have more than one row and more than one column, and assume that $w$ is $\lambda$-maximal.  By definition, $\ell(w)=|\lambda|$, and since ${\sf std}:{\sf EG}(w,\lambda)\to {\sf SYT}(\lambda)$ is an injection between two finite sets of the same size by Theorem \ref{injection}, it is a bijection.  

By Lemma \ref{non-maximal}, ${\sf supp}(w) = \{i_1,i_2,\dots, i_{\ell(w)}\}$, where we can say $i_1<i_2<\dots<i_{\ell(w)}$ without loss of generality.  Therefore, ${\sf std}$ maps the label $i_k$ to $k$ and ${\sf std}^{-1}$ maps the label $k$ to $i_k$ for each $k$.  Now assume for the sake of contradiction that $w$ is not totally commutative.  This means that  $m := \min \{j:i_j+1 = i_{j+1}\}$ is finite.

Recall the definition of ${\sf Red}(U)$ for $U\in {\sf EG}(w,\lambda)$ from directly before Claim \ref{columnC}.

\begin{claim}
\label{sytorder}
If $T \in {\sf SYT}(\lambda)$ is such that $s_{i_m}$ occurs before $s_{i_{m+1}}$ in ${\sf Red}({\sf std}^{-1}(T))$, then $s_{i_{m}}$ occurs before $s_{i_{m+1}}$ in ${\sf Red}({\sf std}^{-1}(T'))$ for all other $T' \in {\sf SYT}(\lambda)$.
\end{claim}
\begin{proof}
This follows immediately from Lemma \ref{red order} and the fact that each simple transposition occurs at most once in each element of ${\sf Red}(w)$.
\end{proof}

There are three cases to consider: $m=1$, $m=|\lambda|-1$ and $\lambda$ is a rectangle, and the case where neither of the above is true.

\noindent
{\sf Case 1:} ($m=1$)  Let $T,T'\in {\sf SYT}(\lambda)$ be such that ${\sf Label}_T(2,1)=2$ and ${\sf Label}_{T'}(1,2)=2$.  As a result, because ${\sf Label}_T(1,1)= {\sf Label}_{T'}(1,1)=1$, $s_{1}$ occurs before $s_2$ in ${\sf Red}({\sf std}^{-1}(T))$, but $s_{2}$ occurs before $s_1$ in ${\sf Red}({\sf std}^{-1}(T'))$.  This contradicts Claim \ref{sytorder}.

\noindent
{\sf Case 2:} ($m=|\lambda|-1$ and $\lambda$ is rectangular) Say that $\lambda$ is a $a\times b$ rectangle so that $m = ab-1$.  Let $T,T'\in {\sf SYT}(\lambda)$ be such that ${\sf Label}_T(a-1,b)=m$ and ${\sf Label}_{T'}(a,b-1)=m$.  As a result, because ${\sf Label}_T(a,b)= {\sf Label}_{T'}(a,b)=m+1$, $s_{m}$ occurs before $s_{m+1}$ in ${\sf Red}({\sf std}^{-1}(T))$, but $s_{m+1}$ occurs before $s_m$ in ${\sf Red}({\sf std}^{-1}(T'))$.  This once again contradicts Claim \ref{sytorder}.

\noindent
{\sf Case 3:} (Neither {\sf Case 1} nor {\sf Case 2}) There exists some $T\in {\sf std}(\lambda)$ such that the cell labeled $m$ in $T$ (denoted $(a,b)$) is strictly northeast of the cell $m+1$ in $T$ (denoted $(c,d)$).  From this, let $T'\in {\sf std}(\lambda)$ be identical to $T$ except that ${\sf Label}_{T'}(a,b) = m+1$ and ${\sf Label}_{T'}(c,d) = m$.  As before, $s_{m}$ occurs before $s_{m+1}$ in ${\sf Red}({\sf std}^{-1}(T))$, but $s_{m+1}$ occurs before $s_m$ in ${\sf Red}({\sf std}^{-1}(T'))$, contradicting Claim \ref{sytorder}.

This completes the proof. 
\end{proof}

The above theorem allows us to characterize the permutations that maximize the Edelman-Greene statistic.

\begin{corollary}
${\sf EG}(w)={\sf inv}(\ell(w))$ if and only if $w$ is totally commutative.

\end{corollary}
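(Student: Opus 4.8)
The plan is to derive the corollary from the shape-by-shape classification in Theorem~\ref{maximal}, after a short bookkeeping reduction. Write $n=\ell(w)$. First I would observe that Corollary~\ref{cor1} gives $a_{w,\lambda}\le f^\lambda$ for every partition $\lambda$, while the Schensted count invoked in the proof of (\ref{maineq}) gives $\sum_{|\lambda|=n}f^\lambda={\sf inv}(n)$; hence
\[
{\sf inv}(n)-{\sf EG}(w)=\sum_{|\lambda|=n}\bigl(f^\lambda-a_{w,\lambda}\bigr)\ge 0 ,
\]
a sum of nonnegative terms. Consequently ${\sf EG}(w)={\sf inv}(n)$ if and only if $a_{w,\lambda}=f^\lambda$ for \emph{every} $\lambda$ with $|\lambda|=n$, i.e.\ if and only if $w$ is $\lambda$-maximal for all such $\lambda$. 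This turns the corollary into a pure statement about $\lambda$-maximality, which is exactly what Theorem~\ref{maximal} resolves.

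For the ``if'' direction: if $w$ is totally commutative then $\ell(w)=n$ and Lemma~\ref{totally commutative} says $w$ is $\lambda$-maximal for every $|\lambda|=n$, so ${\sf EG}(w)={\sf inv}(n)$ by the reduction above. For the ``only if'' direction, suppose $w$ is $\lambda$-maximal for all $|\lambda|=n$. When $n\ge 3$ I would feed the shape $\lambda=(n-1,1)$ --- which has more than one row and more than one column --- into part (3) of Theorem~\ref{maximal}: $\lambda$-maximality forces $w$ to be totally commutative (and forces $\ell(w)=n$, which we already know). That settles all $n\ge 3$ immediately.

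The only residual point, and where I expect the (mild) friction, is the small cases $n\le 2$, for which the hypothesis of Theorem~\ref{maximal}(3) can never hold. For $n\le 1$ every permutation of length $n$ is vacuously totally commutative, so there is nothing to check. For $n=2$ write $w=s_as_b$; applying parts (1) and (2) of Theorem~\ref{maximal} to the shapes $(2)$ and $(1,1)$ shows that $w$ must be simultaneously of the form $s_{i_2}s_{i_1}$ with $i_1<i_2$ and of the form $s_{j_1}s_{j_2}$ with $j_1<j_2$. These are two distinct reduced words of the length-two permutation $w$, which is possible only when its two letters commute, i.e.\ only when $w$ is totally commutative. Combining the regimes $n\le 1$, $n=2$, and $n\ge 3$ finishes the argument; all the real content lives in Theorems~\ref{injection} and \ref{maximal}, and this corollary is bookkeeping on top of them.
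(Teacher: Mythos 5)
Your proposal is correct and takes essentially the same route as the paper: reduce the equality ${\sf EG}(w)={\sf inv}(\ell(w))$ to $w$ being $\lambda$-maximal for every $\lambda$ with $|\lambda|=\ell(w)$, use Lemma~\ref{totally commutative} for the reverse direction, apply Theorem~\ref{maximal}(3) to a shape with at least two rows and two columns when $\ell(w)\ge 3$, and handle $\ell(w)\le 2$ by hand. Your $\ell(w)=2$ argument via the shapes $(2)$ and $(1,1)$ is, if anything, spelled out more carefully than the paper's, which cites Lemma~\ref{non-maximal} and then asserts the two letters commute.
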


\begin{proof}
The reverse direction follows from Lemma \ref{totally commutative}.  For the forward direction, consider three cases, based on the size of $\ell(w)$.  If $\ell(w)=1$, then $w$ is always totally commutative.  If $\ell(w)=2$, then Lemma \ref{non-maximal} says that ${\sf supp}(w) = \{i_1,i_2\}$, and $s_{i_1}s_{i_2} = s_{i_2}s_{i_1}$, so they commute and so $w$ is totally commutative.  For $\ell(w)\geq 3$, there exists some $\lambda$ with $|\lambda|=\ell(w)$ and $\lambda$ having at least two rows and at least two columns.  Since ${\sf EG}(w)={\sf inv}(\ell(w))$, $w$ must be $\lambda$-maximal, and so by Theorem \ref{maximal}, $w$ must be totally commutative.
\end{proof}

\section*{Acknowledgment}

We thank Brendan Pawlowski and Alexander Yong for helpful discussions on the subject.

\end{document}